\numberwithin{equation}{section}
\theoremstyle{plain}
\newtheorem{thm}{Theorem}[section]
\newtheorem{lem}{Lemma}[section]
\newtheorem{cor}{Corollary}[section]
\begin{document}

\title{Random construction of interpolating sets for high dimensional
integration}

\maketitle

{\bfseries \sffamily Mark L. Huber} \par
{\slshape Claremont McKenna College} \par
{\slshape mhuber@cmc.edu}
\vskip 1em
{\bfseries \sffamily Sarah Schott} \par
{\slshape Duke University} \par
{\slshape schott@math.duke.edu}
\vskip 1em

\begin{abstract}
Many high dimensional integrals can be reduced
to the problem of finding the relative measures of two sets.
Often one set will be exponentially larger than the other, making
it difficult to compare the sizes.
A standard method of dealing with this problem is to 
interpolate between the sets with a sequence of nested sets where neighboring
sets have relative measures bounded above by a constant.  Choosing 
such a well balanced sequence
can be very difficult in practice.  Here a new approach that 
automatically creates such sets is presented.  These well balanced
sets allow for faster approximation algorithms for integrals and sums, and 
better tempering and annealing Markov chains for generating random samples.
Applications such as finding the partition function of the Ising model and
normalizing constants for posterior distributions in Bayesian methods
are discussed.
\end{abstract}



\section{Introduction}

\newcommand{\tpa}{{\sffamily TPA}}

Monte Carlo methods for numerical integration can have enormous variance
for the types of high dimensional problems that arise in statistics and
combinatorial optimization applications.  Consider a state space $\Omega$
with measure $\mu$, and $B \subset \Omega$ with finite measure.
Then the problem considered here is approximating
\begin{equation}
\label{EQN:initial}
Z = \int_{x \in B} d\mu(x).
\end{equation}

The classical Monte Carlo approach is to create a random variable $X$
such that $E(X) = Z$ where $X$ has variance as small as possible.
Unfortunately, it is often not possible to know the variance of $X$ 
ahead of time, and this must be estimated as well.  How good the estimate
of the variance is depends on even higher moments which are even more difficult
to estimate.

The method presented here creates an estimate of $Z$ of the form 
$e^{X/k}$, where $k$ is a known constant and $X$ is a Poisson random
variable with mean $k\ln(Z)$.  Because the mean and variance for a 
Poisson random variable are the same, we simultaneously obtain our 
estimate of $Z$ and knowledge of the variance of our estimate.

In fact, the output from our method does the following:
\begin{itemize}
\item{Estimate $Z$ to within a specified relative error with a specified
failure probability in time $O(\ln(Z)^2)$.}
\item{Create a well balanced sequence of nested sets useful in building
annealing and tempering Markov chains that can be used to generate 
Monte Carlo samples.}
\item{Develop an omnithermal approximation for partition functions 
arising from spatial point processes and Gibbs distributions.}
\end{itemize}

\paragraph{Previous work} The new method presented here follows
a long line of work using interpolating sets.  For instance, Valleau and 
Card~\cite{valleauc1972} introduced what they called {\em multistage sampling}
where an intermediate distribution was added to make estimation more
effective.
Jerrum, Valiant and Vazirani~\cite{jerrumvv1986} used a similar idea of
{\em self-reducibility}, and carefully analyzed the computational complexity
of the resulting approximation method.

Suppose we are given two finite sets $B'$ and $B$ such that 
$B' \subset B$ and $\#B$ (the number of elements of $B$), is known.
One way of viewing self-reducibility, is that it effectively requires 
a sequence of sets $B = B_0 \supset B_1 \supset B_2 \supset 
 \cdots \supset B_\ell = B'$ such that 
the relative sizes of the sets $\#B_{i+1}/\#B_{i} \geq \alpha$ 
for a fixed constant $\alpha \in (0,1)$.
Then an unbiased estimate $\hat b_i$ of $\#B_{i+1}/\#B_{i}$ is created for each 
$i$.  The product of these estimates will then be an unbiased estimator for
$\#B' / \#B$, and multiplying by $\#B$ gives the final estimate of $\#B$.

For fixed $\alpha \in (0,1)$,
it is easy to estimate $\#B_{i+1}/\#B_i$ with small relative error
simply by drawing samples from $\#B_{i}$ and counting the percentage
that fall in $\#B_{i+1}$.  The relative standard deviation of a 
Bernoulli random variable with parameter $\alpha$ is $(1 - \alpha)/\alpha$,
so it is important not to make $\alpha$ too small.  On the other hand,
if $\alpha$ is too large, then the nested sets are not shrinking much at
each step, and it will require a lengthy sequence of such sets.  To be
precise,
the number of sets $\ell$ must satisfy 
$\ell \geq \ln_\alpha(\#B'/\#B) = \ln(\#B/\#B')/\ln(\alpha^{-1})$
which goes to infinity as $\alpha$ goes to 1. 
Balancing these two considerations leads
to a optimal $\alpha$ value of about $0.2031$.

The difficulty in applying self-reducibility is 
finding a sequence of sets
such that $\#B_{i+1} / \#B_{i}$ is provably at least $\alpha$, but not so
close to 1 that the sequence of sets is too long.  Ideally, 
$\#B_{i+1} / \#B_i$ would equal $\alpha$ for every $i$, or at least be very
close.  For fixed constants $\alpha_1$ and $\alpha_2$, 
refer to a sequence of sets where the ratios 
$\#B_{i+1} / \#B_i$ fall in $[\alpha_1,\alpha_2]$ for all $i$ 
as {\em well-balanced}.

Well-balanced sequences have other uses as well.  Methods of designing
Markov chains such as simulated annealing~\cite{kirkpatrickgv1983},
simulated tempering, and 
parallel tempering~\cite{swendsenw1986,geyer1991,marinarip1992} all require 
such a sequence of well-balanced sets in order to mix rapidly 
(see~\cite{huber2009a,huber2009b}.)

Now consider the special case of (\ref{EQN:initial}) where $Z$ is the
normalizing constant of a posterior distribution of a Bayesian analysis. 
Skilling~\cite{skilling2006} introduced {\em nested sampling} as a way
of generating a random sequence of nested sets.  The advantage this
method has over self-reducibility is that there is no need to have the
sequence of sets in hand ahead of time.  Instead, it builds up sets from
scratch at random according to a well-defined procedure.  

The disadvantage is that it loses the property of self-reducible algorithms
that the variance of
 the output could be bounded prior to running the
algorithm.  Because deterministic numerical integration was used
in the method, the variance can be determined only up to a factor that 
depends upon the derivatives of a function that is difficult to compute.
Therefore, nested sampling falls in the class of methods where the variance
must be estimated, rather than bounded 
ahead of time as with self-reducibility.

\paragraph{The Tootsie Pop Algorithm}  The method presented here is called
{\em The Tootsie Pop Algorithm} (\tpa), and combines the tight analysis of 
self-reducibility by adding features similar to nested sampling.  
Like self-reducibility, it is 
very general, working over a wide variety of problems.  This includes
the nested sampling domain of Bayesian posterior normalization, but also
includes many other problems where self-reducibility has been applied such
as the Ising model.  Portions of this work were presented at the 
Ninth Valencia International Meetings on
Bayesian Statistics, and also appears in the conference 
proceedings~\cite{huber2010b} with a discussion.

The name is somewhat unusual, and references an advertising campaign run
for Tootsie Pop candies.  A Tootsie Pop is a 
chocolate chewy center surrounded by a candy shell.  The ad campaign
asked ``How many licks does it take to get to the center of a Tootsie Pop?''.
Our algorithm operates in a similar fashion.  Our set $B$ is 
slowly whittled away until the center $B'$ is reached.  The number of 
steps taken to move from $B$ to $B'$ will be Poisson with mean
$\ln(\mu(B)/\mu(B'))$, thereby allowing approximation of $\mu(B)/\mu(B')$.
Therefore, the ``number of licks'' is exactly what is needed to 
form our estimate!

\subsection{Organization}
Section~\ref{SEC:tpa} describes the {\tpa} procedure in detail, then
Section~\ref{SEC:applications} shows some applications.  
Section~\ref{SEC:running} then analyzes the expected running time of the
method, and introduces a two phase approach to {\tpa}.
Section~\ref{SEC:balanced} describes how {\tpa} can be used to build
well-balanced nested sets for tempering.  Section~\ref{SEC:omnithermal}
shows how to create an approximation that simultaneously works for 
all members of a continuous family of sets at once.  Finally, 
Section~\ref{SEC:conclusions} discusses further areas of exploration
with {\tpa} techniques.

\section{The Tootsie Pop Algorithm}
\label{SEC:tpa}

The {\tpa} method has four general ingredients:
\begin{enumerate}
\item{A measure space $(\Omega,{\cal F},\mu)$}
\item{Two finite measurable sets $B$ and $B'$ satisfying $B' \subset B$ and
$\mu(B') > 0$.
The set $B'$ is the {\em center} and $B$ is the {\em shell}.}
\item{A family of nested sets 
$\{A(\beta):\beta \in {\mathbf R} \cup \{\infty\}\}$ such
that $\beta' < \beta$ implies $A(\beta') \subseteq A(\beta)$, 
$\mu(A(\beta))$ is a continuous function of $\beta,$ and
the limit of $\mu(A(\beta))$ as $\beta$ goes to $-\infty$ is 0.}
\item{Special values $\beta_B$ and $\beta_{B'}$ that satisfy
$A(\beta_B) = B$ and $A(\beta_{B'}) = B'$.}
\end{enumerate}

With these ingredients, the {\tpa} method is very simple to describe.
\begin{enumerate}
\item{Start with $i = 0$ and $\beta_i = \beta_{B}$.}
\item{\label{ITM:sample} 
Draw a random sample $Y$ from $\mu$ conditioned to lie in $A(\beta_i)$.}
\item{Let $\beta_{i+1} = \inf\{\beta:Y \in A(\beta)\}$.}
\item{If $Y \in B'$ stop and output $i$.}
\item{Else set $i$ to be $i + 1$ and
go back to step 2.}
\end{enumerate}
Another way of describing the draw in Step 2 is that for measurable $D$, 
$P(Y \in D) = \mu(D \cap A(\beta_i))/\mu(A(\beta_i)).$  At each step, 
the set $A(\beta_i)$ shrinks with probability 1, and so is slowly worn away
until the sample falls into the region $B'$.
 
Line~\ref{ITM:sample} above deserves special attention.  Drawing
a random sample $Y$ from $\mu$ conditioned to lie in $A(\beta_i)$ is in
general a very difficult problem.  The good news is that the importance of 
this problem means that a vast literature
for solving this problem exists.  Markov chain Monte Carlo (MCMC)
methods are critical to obtaining these samples, and variations on
the early methods 
have blossomed over the last fifty years.  Readers are referred
to~\cite{shonkwilerm2009,robertc2010,fishman1994} and the references
therein for more information.

Of course, any other method for turning samples into approximations 
either implicitly or explicitly depend on the ability to execute 
some variant of line~\ref{ITM:sample} as well, 
so our algorithm is not actually demanding anything
above and beyond what others require.  The algorithm is easily modified
to handle different methods of simulating random variables.  For instance,
nested sampling~\cite{skilling2006} draws several such $Y$ variables at
once, and \tpa{} can be written to do so as well.  

The key fact about this process is the following:
\begin{thm}
At any step of the algorithm, let 
\[
E_i = \ln(\mu(A(\beta_i))) -\ln(\mu(A(\beta_{i+1}))).
\]  
Then the $E_i$
are independent, identically distributed exponential random variables
with mean 1.
\end{thm}

\begin{proof}  To simplify the notation, let $m(b) = \mu(A(b))$.
Begin by showing that each $U_i = m(\beta_{i+1})/m(\beta_i)$ is 
uniform over $[0,1]$.  Fix $\beta_i \geq \beta_{B'}$ and let $a \in (0,1)$.  Then
since $m(b)$ is a continuous function in $b$ with 
$\lim_{b \rightarrow -\infty} m(b) = 0$, there exists a 
$b \in (-\infty,\beta_i]$ such that 
$m(b) / m(\beta_i) = a.$  Call this value $\beta_a$.

Let $0 < \epsilon < m(\beta_B) - a.$  Then by the same reasoning 
there is a value 
$\beta_{a + \epsilon} \leq \beta_B$ such that 
$m(\beta_{a + \epsilon}) / \mu(\beta_i) = a + \epsilon.$  Now consider
$Y$ drawn from $\mu$ conditioned to lie in $A(\beta_i)$.  Then 
$\beta_{i+1} = \inf\{b:Y \in A(b)\}$.  

Moreover, $\{U_i \leq a\} \Rightarrow \{Y \in A(\beta_a)\},$ an
event which occurs with probability $m(\beta_a)/m(\beta_i) = a$.  So
$P(U_i \leq a) \geq a$.

On the other hand, $Y \notin A(\beta_{a + \epsilon})$ implies 
$\beta_{i+1} \geq \beta_{a + \epsilon}$ which means that 
$U_i = m(\beta_{i+1}) / m(\beta_i) \geq a + \epsilon$.  In other words,
$P(U_1 < a + \epsilon) \leq P(Y \in A(\beta_{a + \epsilon})) = a + \epsilon$.
This holds for $\epsilon$ arbitrarily small, hence by dominated convergence
$P(U_i \leq a) \leq a.$
Therefore, $P(U_i \leq a)$ is at least and at most $a$, so 
$P(U_i \leq a) = a$, which shows that $U_i$ is uniform on $[0,1]$.

Observing that the negative of the natural log of a uniform number of $[0,1]$
is an exponential with mean 1 completes the proof.
\end{proof}

For $t(b) = \ln(\mu(A(b)))$, the theorem says the points
$t(\beta_0),t(\beta_1),\ldots,t(\beta_i)$ in a run of {\tpa} are separated
by exponential random variables of mean 1, in other words, these points
form a homogeneous Poisson point process on 
$[t(\beta_{B'}),t(\beta_B)]$ of rate 1.

\subsection{Taking advantage of Poisson point processes}
The first application of this is to describe the 
total number of points used by a run of {\tpa},
that is, the value of $i$ at the end of the 
algorithm.  Because the $t(\beta_i)$ values form a Poisson point process, the
distribution of $i$ is Poisson with mean 
$t(\beta_B) - t(\beta_{B'}) = \ln(\mu(B)/\mu(B')).$

Furthermore, the union of $k$ independent Poisson point processes of rate
1 is also a Poisson point process of rate $k$.  That means that after $k$
runs of {\tpa}, the distribution of the total number of samples used is
Poisson with mean $k\ln(\mu(B)/\mu(B')).$

\section{Applications}
\label{SEC:applications}
The following examples illustrate some of the uses for {\tpa}.

\subsection{The Ising model}  The Ising model is an example of a 
{\em Gibbs distribution}, where a function 
$H:\Omega \rightarrow \mathbf{R}$ gives rise
to a distribution on $\Omega$:
\begin{equation}
\pi(x) = \frac{1}{Z(\beta)} \exp(-\beta H(x)).
\end{equation}
From applications in statistical physics, 
$\beta \geq 0$ is known as the inverse temperature,
and $Z(\beta)$ is called the partition function.

In the Ising model, each node of a graph $G = (V,E)$ is assigned one
of two values.  There are many ways to represent the model.  
In the form considered here, each node is either 0 or 1,
and for $x \in \{0,1\}^V$, $-H(x)$ is one plus 
the number of edges $e \in E$ such 
that the endpoints of the edge have the same value in $x$.  This can be
written as
$H(x) = -[1 + \sum_{\{i,j\} \in E} (1 - x(i) - x(j) + 2 x(i) x(j))].$

In order to embed this problem in the framework of {\tpa}, add an
auxiliary dimension to the configuration $x$.  The auxiliary state space
is 
\[
\Omega_{\textrm{aux}}(\beta) = \{(x,y):x \in \{0,1\}^V,y\in[0,\exp(-\beta H(x))\}.
\]
Some notes on $\Omega_{\textrm{aux}}(\beta)$:
\begin{itemize}
\item{The total length of the line segments in $\Omega_{\textrm{aux}}(\beta)$ is
just $Z(\beta)$.  That is to say, $\mu(\Omega_{\textrm{aux}}(\beta)) = Z(\beta)$
where $\mu$ is the one dimensional Lebesgue measure of the union of the line
segments.}
\item{Let $\beta' < \beta$.  Then since $-H(x) > 0$, 
$\Omega_{\textrm{aux}}(\beta') \subset \Omega_{\textrm{aux}}(\beta)$.  Moreover,
$Z(\beta)$ is a continuous function that goes to 0 as 
$\beta \rightarrow -\infty$.  Therefore Condition 
2 of the {\tpa} ingredients is satisfied.}
\item{For $\beta = 0$, $y \in [0,1]$ for all $x \in \{0,1\}$.  That means
$Z(0) = 2^V$.}
\item{Let $\beta > 0$.  Then $\Omega_{\textrm{aux}}(\beta)$ is the 
shell, and $\Omega_{\textrm{aux}}(0)$ is the center.}
\end{itemize}

With this in mind, the {\tpa} algorithm works as follows.
\begin{enumerate}
\item{Start with $i = 0$ and $\beta_i = \beta$.}
\item{Draw a random sample $X$ from $\pi_{\beta_i}$, then
draw $Y$ (given $X$) uniformly from $[0,\exp(-\beta_i H(X))]$.}
\item{Let $\beta_{i+1} = \ln(Y)/(-H(X))$}
\item{If $\beta_{i+1} \leq 0$ stop and output $i$.}
\item{Else set $i$ to be $i + 1$ and
go back to step 2.}
\end{enumerate}

One run of {\tpa} will require on average  
$1 + \ln(Z(\beta)/Z(0)) = 1 + \ln(Z(\beta)) - \#V \ln(2)$ 
samples from various values
of $\beta$, where $\#V$ is the number of vertices of the graph.

This method of adding an auxiliary variable allows {\tpa} to be used
on a variety of discrete distributions by changing the measure to one
that varies continuously in the index.

\subsection{Posterior distributions}  In Bayesian analysis, often it
is necessary to find the normalizing constant of a posterior distribution.
This is known as the {\em evidence} for a model, and can be written:
\[
Z = \int_{x \in \Omega} f(x) \ dx,
\]
where $f(x)$ is a nonnegative density (the product of the prior density
and the likelihood of the data)  and $\Omega \subseteq \mathbf{R}^n$.

For a point $c \in \Omega$ and $\epsilon > 0$, let $B^1_{\epsilon}(c)$ be the 
points within $L_1$ distance $\epsilon$ of $c$.
Suppose that for a particular $c$ and $\epsilon$, 
$B^1_{\epsilon}(c) \subset \Omega$ and
there is a known $M$ such that 
$(1/2) M \leq f(x) \leq M$ for all $x \in B^1_{\epsilon}(c)$.

Then to estimate $Z(\epsilon) = \int_{x \in B^1_{\epsilon}(c)} f(x) \ dx$, 
draw $N$ iid samples
$X_1,\ldots,X_N$ 
uniformly from $B_\epsilon(c)$, and let the estimate be
$\hat Z(\epsilon) = (2\epsilon)^{-n} \sum_{i} f(X_i) / N$.  
Then $\hat Z(\epsilon)$ is an
unbiased estimate for $Z(\epsilon)$ with standard deviation bounded above
by $Z(\epsilon)/\sqrt{k}$.

Now the connection to {\tpa} can be made.  The family of sets will be
$\{A(\beta) = B^1_{\beta}(c) \cap \Omega\}$, and the measure is
$\mu(A(\beta)) = \int_{x \in A(\beta)} f(x) \ dx$.  
The shell will be $A(\infty)$
(so $Z = \mu(A(\infty))$) 
and the center $A(\epsilon)$ (with measure $Z(\epsilon)$.)  
{\tpa} can then be used to estimate 
$Z / Z(\epsilon)$, and the estimate of $Z(\epsilon)$ can then finish 
the job.

\section{Running time of {\tpa}}
\label{SEC:running}

Suppose that {\tpa} is run $k$ times, and the $k$ values of the $i$ variable at
the end of each run are summed together.  Call this sum $N$.  
Then $N$ has a Poisson distribution with
mean $k\ln(\mu(B)/\mu(B')).$  This makes $N / k$ an unbiased estimate
of $\ln(\mu(B)/\mu(B')).$  The variance of $N/k$ is
$\ln(\mu(B)/\mu(B'))/k$.

Let $W$ be a normal random variable of mean 0 and variance 1,
and $W_{\alpha}$ be the inverse cdf of $W$ so that 
$\textrm{Pr}(W \leq W_{\alpha}) = \alpha$.
Then the normal approximation to the Poisson gives 
\begin{equation}
\left[(N/k) - W_{\alpha/2} \sqrt{N/k},(N/k) + W_{\alpha/2} \sqrt{N/k} \right]
\end{equation}
as an
approximately $1 - \alpha$ level confidence interval for $\ln(\mu(B)/\mu(B')).$
Exponentiating then gives the $1 - \alpha$ level for $\mu(B)/\mu(B')$.

For a specific output, it is also possible to build an exact confidence
interval for $\mu(B)/\mu(B')$ 
since the distribution of the output is known exactly.

Similarly, it is easy to perform a Bayesian analysis and find a
credible interval given a prior on $\ln(\mu(B)/\mu(B')).$  

Lastly, consider how to build an 
$(\epsilon,\delta)$ {\em randomized approximation scheme} 
(RAS) whose
output $\hat A$ satisfies:
\[
\textrm{Pr}\left((1+\epsilon)^{-1} \leq \frac{\hat A}
  {\mu(B)/\mu(B')} < 1 + \epsilon\right)
 > 1 - \delta.
\]

For simplicity, assume that 
$\mu(B) / \mu(B') \geq e$.  Note that when $\mu(B) / \mu(B') < e$, then
simple acceptance rejection can be used to obtain an $(\epsilon,\delta)$-RAS 
in $\Theta(\epsilon^{-2}\ln(\delta^{-1}))$ time.  See 
\cite{fishman1996} for a description of this method.

The following lemma gives a bound on the 
tails of the Poisson distribution.
\begin{lem}
\label{LEM:chernoff}
Let $\tilde\epsilon > 0$ and $N$ be a 
Poisson random variable with mean $k \lambda$, 
where $\tilde\epsilon / \lambda \leq 2.3$.  
Then
\[
\textrm{Pr}\left( \left|\frac{N}{k} - \lambda\right| \geq \tilde\epsilon \right)
 \leq 2 \exp\left(-\frac{k\tilde\epsilon^2}{2\lambda}
  \left(1 - \frac{\tilde\epsilon}{\lambda}\right)
 \right).
\]
\end{lem}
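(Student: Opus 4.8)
The plan is to bound the two tails separately by the moment generating function (Chernoff) method and then combine them with a union bound, which accounts for the factor of $2$. Writing $m = k\lambda$, $a = k\tilde\epsilon$, and $\delta = \tilde\epsilon/\lambda = a/m$, the event in question is $\{N \ge m(1+\delta)\} \cup \{N \le m(1-\delta)\}$, and the target exponent $-\tfrac{k\tilde\epsilon^2}{2\lambda}(1-\tilde\epsilon/\lambda)$ rewrites cleanly as $-\tfrac{m\delta^2}{2}(1-\delta)$.

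First I would treat the upper tail. Using the Poisson moment generating function $E[e^{sN}] = \exp(m(e^s-1))$ together with Markov's inequality applied to $e^{sN}$ for $s>0$, and optimizing at $s = \ln(1+\delta)$, gives
\[
\Pr\bigl(N \ge m(1+\delta)\bigr) \le \exp\bigl(m[\delta - (1+\delta)\ln(1+\delta)]\bigr).
\]
For the lower tail, the same idea with $s = \ln(1-\delta) < 0$ (legitimate when $\delta < 1$) yields
\[
\Pr\bigl(N \le m(1-\delta)\bigr) \le \exp\bigl(m[-\delta - (1-\delta)\ln(1-\delta)]\bigr).
\]
When $\delta \ge 1$ the lower-tail event sits inside $\{N \le 0\}$ and is trivial, and the asserted bound then has a nonnegative exponent, so its value is at least $2$ and it holds vacuously. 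This is where the hypothesis $\tilde\epsilon/\lambda \le 2.3$ is convenient: it delineates the range of interest while confining the substantive work to $\delta \in (0,1)$.

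With the two Chernoff exponents in hand, it remains to show each is at most $-\tfrac{m\delta^2}{2}(1-\delta)$; equivalently, the two elementary inequalities
\[
(1+\delta)\ln(1+\delta) - \delta \ge \tfrac{\delta^2}{2}(1-\delta)
\quad\text{and}\quad
(1-\delta)\ln(1-\delta) + \delta \ge \tfrac{\delta^2}{2}(1-\delta).
\]
I expect these to be the main obstacle, as the rest is routine. My approach is to expand each left-hand side as a power series, using $(1-\delta)\ln(1-\delta)+\delta = \sum_{j\ge2}\delta^{j}/[j(j-1)]$ and $(1+\delta)\ln(1+\delta)-\delta = \sum_{j\ge2}(-1)^{j}\delta^{j}/[j(j-1)]$, and then subtracting $\tfrac{\delta^2}{2}(1-\delta) = \tfrac{\delta^2}{2}-\tfrac{\delta^3}{2}$. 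In the lower-tail case the quadratic terms cancel and one is left with $\tfrac{2}{3}\delta^3 + \sum_{j\ge4}\delta^{j}/[j(j-1)]$, a series of positive terms, hence nonnegative. In the upper-tail case the same cancellation leaves $\tfrac{1}{3}\delta^3 + \sum_{j\ge4}(-1)^{j}\delta^{j}/[j(j-1)]$, where for $0<\delta\le1$ the alternating tail has terms of decreasing magnitude and a positive leading term, so its sum is nonnegative and the whole expression is at least $\tfrac{1}{3}\delta^3 \ge 0$. Establishing these two bounds, substituting back, and adding the two tail estimates then delivers the claimed inequality.
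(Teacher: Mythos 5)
Your proof is correct, but it takes a genuinely different route from the paper's. The paper does not prove the lemma directly: it derives it as the special case $t=\lambda$ of Theorem~5.1, a maximal inequality for the whole Poisson process, proved via Doob's submartingale inequality applied to $\exp(\alpha N_P(t))$ with the \emph{non-optimized} choice $\alpha=\tilde\epsilon/\lambda$ and the Taylor estimate $e^{\alpha}-1-\alpha\le(\alpha^{2}/2)(1+\alpha)$, whose range of validity $[0,2.31858\ldots]$ is precisely where the hypothesis $\tilde\epsilon/\lambda\le 2.3$ comes from. You instead work with the single Poisson variable $N$, optimize the Chernoff parameter exactly at $s=\ln(1\pm\delta)$, and then verify by power series that each optimal exponent is dominated by $-\tfrac{m\delta^{2}}{2}(1-\delta)$; your series computations check out ($\tfrac23\delta^3$ plus positive terms for the lower tail, $\tfrac13\delta^3$ plus a nonnegative alternating tail for the upper tail). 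Your approach buys a self-contained, fully elementary proof in which the lower tail is handled explicitly (the paper only says it is "similar") and the hypothesis $\tilde\epsilon/\lambda\le 2.3$ becomes superfluous, since for $\delta\ge 1$ the right-hand side exceeds $2$ and the bound is vacuous. What it does not buy is the uniform-in-$t$ statement of Theorem~5.1, which the paper needs for the omnithermal approximation and which is the real reason the authors route the lemma through the martingale argument.
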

(This result is a special case of Theorem~\ref{THM:process} shown later.)

To obtain our $(\epsilon,\delta)$-RAS, it is sufficient to make
$\tilde\epsilon = \ln(1 + \epsilon)$, and to choose $k$ so that 
$2 \exp(-k\tilde\epsilon^2(1 - \tilde\epsilon/\lambda)/[2\lambda]) 
  \leq \delta$, where
$\lambda = \ln(\mu(B)/\mu(B')).$  This is
made more difficult by the fact that $\lambda$ is unknown at the start of the
algorithm!

There are many ways around this difficulty, perhaps the simplest is to
use a two phase method.  
First get a rough estimate of $\lambda$, then refine this estimate to 
the level demanded by $\epsilon.$

\begin{description}
\item[Phase I] Let $\epsilon_a = \ln(1 + \epsilon)$ and  
$k_1 = 2 \epsilon_a^{-2} (1 - \epsilon_a)^{-1}\ln(2\delta^{-1}).$ 
Then let $N_1$ be the sum
of the outputs from $k_1$ runs of {\tpa}.
\item[Phase II] Set $k_2 = N_1(1 - \epsilon_a)^{-1}.$  Let
$N_2$ be the sum of the outputs from $k_2$ runs of {\tpa}.  The final
estimate is $\exp(N_2 / k_2)$.
\end{description}

Phase I estimates $\lambda$ to within an additive error
$\epsilon_a \lambda$.
Phase II uses the Phase I estimate of $\lambda$ to create a better 
estimate of $\lambda$
to within an additive error of $\epsilon_a$.  Note that 
$\epsilon_a \approx \epsilon$ in the sense that 
$\lim_{\epsilon \rightarrow 0} \epsilon_a / \epsilon = 1$.

\begin{thm}
The output $\hat A$ of the above procedure is an $(\epsilon,\delta)$ 
randomized approximation scheme for $\mu(B)/\mu(B').$ 
The running time is random, with an expected running time that is 
$\Theta((\ln(\mu(B)/\mu(B')))^2 \epsilon^{-2}\ln(\delta^{-1})).$
\end{thm}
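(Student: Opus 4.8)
The plan is to separate the two claims---correctness of the approximation scheme and the running-time estimate---and to reduce both to the Poisson structure established in the earlier sections. Throughout write $\lambda = \ln(\mu(B)/\mu(B'))$ and $\epsilon_a = \ln(1+\epsilon)$, and recall the standing assumption $\mu(B)/\mu(B') \ge e$, so that $\lambda \ge 1$. The final output is $\hat A = \exp(N_2/k_2)$, whence
\[
\frac{\hat A}{\mu(B)/\mu(B')} = \exp\!\left(\frac{N_2}{k_2} - \lambda\right),
\]
and the RAS condition $(1+\epsilon)^{-1} \le \hat A/(\mu(B)/\mu(B')) < 1+\epsilon$ is equivalent to $-\epsilon_a \le N_2/k_2 - \lambda < \epsilon_a$. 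Thus the first claim reduces entirely to showing $\Pr(|N_2/k_2 - \lambda| \ge \epsilon_a) \le \delta$, and the whole argument becomes one about the concentration of $N_2/k_2$ around $\lambda$.

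The core difficulty, and the step I expect to be the main obstacle, is that this is a two-stage random experiment: conditioned on Phase~I, the variable $N_2$ is Poisson with mean $k_2 \lambda$, but $k_2 = N_1/(1-\epsilon_a)$ is itself random. I would handle this by conditioning on $N_1$ and splitting on the event that Phase~I does not undershoot, namely $A_1 = \{N_1 \ge (1-\epsilon_a)k_1\lambda\}$, equivalently $\{k_2 \ge k_1\lambda\}$. First I would bound $\Pr(A_1^c)$ using the one-sided lower-tail form of Lemma~\ref{LEM:chernoff} with $\tilde\epsilon = \epsilon_a\lambda$; the choice $k_1 = 2\epsilon_a^{-2}(1-\epsilon_a)^{-1}\ln(2\delta^{-1})$ is exactly calibrated so that, using $\lambda \ge 1$, this tail is controlled by a fraction of $\delta$. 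On $A_1$ I would then apply Lemma~\ref{LEM:chernoff} with $\tilde\epsilon = \epsilon_a$ and $k = k_2 \ge k_1\lambda$ (first checking the hypothesis $\tilde\epsilon/\lambda = \epsilon_a/\lambda \le \epsilon_a \le 2.3$) to bound the conditional Phase~II failure by the complementary fraction of $\delta$. A union bound over the two contributions then yields $\Pr(|N_2/k_2-\lambda|\ge\epsilon_a)\le\delta$. The delicate point is the bookkeeping of the Chernoff constants---tracking the factor of $2$ in Lemma~\ref{LEM:chernoff} and the gap between $1-\epsilon_a$ and $1-\epsilon_a/\lambda$---so that the two failure probabilities genuinely sum to at most $\delta$; the assumption $\lambda \ge 1$ is what supplies the needed slack.

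For the running time, I would measure work by the total number of samples drawn, which is $N_1 + N_2$. Since $N_1$ is Poisson with mean $k_1\lambda$ we have $E[N_1] = k_1\lambda$, and conditioning on $N_1$ gives, by the tower property, $E[N_2] = E[k_2\lambda] = \lambda\, E[N_1]/(1-\epsilon_a) = k_1\lambda^2/(1-\epsilon_a)$. Hence the expected total is
\[
E[N_1+N_2] = k_1\lambda\left(1 + \frac{\lambda}{1-\epsilon_a}\right).
\]
Because $\epsilon_a = \ln(1+\epsilon)$ satisfies $\epsilon_a/\epsilon \to 1$ as $\epsilon \to 0$, one has $k_1 = \Theta(\epsilon^{-2}\ln(\delta^{-1}))$ and $(1-\epsilon_a)^{-1} = \Theta(1)$; together with $\lambda \ge 1$ the $\lambda^2$ term dominates, giving both the upper and lower bounds needed for $E[N_1+N_2] = \Theta(\lambda^2\epsilon^{-2}\ln(\delta^{-1}))$. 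Substituting $\lambda = \ln(\mu(B)/\mu(B'))$ then produces the claimed $\Theta((\ln(\mu(B)/\mu(B')))^2\epsilon^{-2}\ln(\delta^{-1}))$.
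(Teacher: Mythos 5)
Your proposal follows essentially the same route as the paper's proof: declare Phase~I a success when $N_1$ does not stray more than $\epsilon_a\lambda k_1$ from its mean, bound that failure via Lemma~\ref{LEM:chernoff} with $\tilde\epsilon=\epsilon_a\lambda$ using $\lambda\ge 1$, then on the success event use $k_2\ge k_1\lambda$ and the lemma again with $\tilde\epsilon=\epsilon_a$, union-bound the two failures, exponentiate to convert $|N_2/k_2-\lambda|\le\epsilon_a$ into the multiplicative $(1+\epsilon)$ guarantee, and compute $E(N_2)$ by the tower property to get the $\Theta(\lambda^2\epsilon^{-2}\ln(\delta^{-1}))$ running time. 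The only (harmless) deviation is that you condition on the one-sided lower-tail event for Phase~I where the paper uses the two-sided one; everything else, including the role of $\lambda\ge 1$ in supplying the slack between $1-\epsilon_a/\lambda$ and $1-\epsilon_a$, matches the paper's argument.
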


\begin{proof} 
Call Phase I a success if $N_1/k_1$ is within distance $\epsilon_a\lambda$
of $\lambda$.
From Lemma~\ref{LEM:chernoff} with $\tilde\epsilon = \epsilon_a \lambda$:
\[
\textrm{Pr}\left( \left|\frac{N_1}{k_1} - \lambda\right| 
  \geq \epsilon_a \lambda \right)
 = 2 \exp\left( - k_1 (\lambda \epsilon_a^2)(1 - \epsilon_a)\right)
 \leq \delta/2
\]
since $\lambda \geq 1$ and 
$k_1 = \epsilon_a^{-2}(1-\epsilon_a)^{-1} \ln(2\delta^{-1}).$
Therefore, the probability that Phase I is a failure is at most 
$\delta/2$.

When Phase I is a success, 
$(1 -\epsilon_a ) \lambda k_1  \leq N_1$.  
In this event $k_2 = N_1 (1 - \epsilon_a)^{-1} 
\geq \lambda k_1 = \lambda \epsilon_a^{-2} (1 - \epsilon_a)^{-1} 
  \ln(2\delta^{-1}).$ 
Plugging this in to
Lemma~\ref{LEM:chernoff} yields:
\[
\textrm{Pr}\left( \left|\frac{N_2}{k_2} - \lambda\right| 
  \geq \epsilon_a \lambda \right)
 \leq 2 \exp\left( - \frac{\lambda \epsilon_a^{-2} (1 - \epsilon_a)^{-1}
 \ln(2\delta^{-1})
 \epsilon_a^2}{2 \lambda}\left(1 - \frac{\epsilon_a}{\lambda}\right) 
 \right).
\]
Using $\lambda \geq 1$, the right hand side is at most $2\delta$.

The chance of failure in either Phase is at most 
$\delta/2 + \delta/2 = \delta,$ so altogether 
$|(N_2/k_2) - \lambda| \leq \epsilon_a$ with probability at least 
$1 - \delta$.  Exponentiating then gives
\[
(1 + \epsilon)^{-1} = e^{-\epsilon_a} \leq \exp(N_2/k_2)/\lambda 
  \leq e^{\epsilon_a} = 1 + \epsilon
\]
with probability at least $1 - \delta$.

The expected number of samples needed in Phase I is $k_1 \lambda$, while
the expected number needed in Phase II is:
\begin{align*}
\textrm{E}(N_2) 
&= 
 \textrm{E}(\textrm{E}(N_2|N_1))
 = \textrm{E}((1 - \epsilon_a)^{-1} N_1 \lambda)
 = 2(1 - \epsilon_a)^{-2} \epsilon_a^{-2}\ln(2\delta^{-1})\lambda^2.
\end{align*}
Since $\epsilon_a = \Theta(\epsilon)$, the proof is complete.
\end{proof}

\section{Well-balanced nested sets}
\label{SEC:balanced}

Consider running {\tpa} $k$ times, and collecting
all the values of $\beta_i$ generated during these runs.  Let $P$ denote
this set of values, then $P$ forms 
a Poisson point process of rate $k$ on $[\beta_{B'},\beta_B].$

Call $\beta_B = \alpha_0 > \alpha_1 > \cdots > \alpha_{\ell} = \beta_{B'}$
a {\em well-balanced cooling schedule} if 
$\mu(A(\alpha_{i+1})) / \mu(A(\alpha_i))$ is close to $1/e$
for all $i$ from 0 to $\ell - 1$.

Given $P$, finding such a well-balanced set is easy: simply order the 
$\beta$ values in $P$, and set $\alpha_i = \beta_{(ik)}$.  The value of 
$\ln(\mu(A(\alpha_{i+1}) / A(\alpha_{i})))$ will have distribution equal to 
the sum of $k$ iid exponential random variables with mean $1 / k$.   
So $\ln(\mu(A(\alpha_{i+1}) / \mu(A(\alpha_{i}))))$ will be gamma distributed
with mean $1$ and standard deviation $1 / k$.

\section{Omnithermal approximation}
\label{SEC:omnithermal}

Suppose instead of just a single value of interest 
$\mu(B)/\mu(B')$, it is necessary 
to create an approximation of $\mu(A(\beta))/\mu(B')$ that is valid for all
values $\beta \in [\beta_{B'},\beta_B]$ simultaneously.  Call this an 
{\em omnithermal 
approximation}.  These problems appear in what are called doubly intractable
posterior distributions arising in Bayesian analyses involving spatial
point processes.  They 
are usually dealt with indirectly using Markov chain Monte
Carlo with auxiliary variables~\cite{murraygm2006}, 
but omnithermal approximation allows for a 
more direct approach.

In the last section the Poisson point process $P$ formed from the $\beta_i$
values collected from $k$ runs of {\tpa} was introduced.
To move from $P$ to a Poisson process, set
\[
N_P(t) = \#\{b \in P:b \geq \beta_B - t\}.
\]
As $t$ advances from 0 to $\beta_B - \beta_{B'}$, $N_P(t)$ increases by
1 whenever it hits a $\beta$ value.  By the theory of Poisson point processes,
this happens at intervals that will be independent exponential random
variables with rate $k$.

Given $N_P(t)$, approximate $\mu(B)/\mu(A(\beta))$ by 
$\exp(N_P(\beta_B - \beta)/k)$.  When $\beta = \beta_{B'}$, this is just
the approximation given earlier, so this generalizes the description
of {\tpa} from before.

The key fact is that $N_P(t) - kt$ is a right continuous martingale.
To bound the error in $\exp(N_P(t)/k)$, it is necessary to
bound the probability that $N_P(t) - kt$ has drifted too far away from 0.

\begin{thm}
\label{THM:process}
Let $\tilde\epsilon > 0$.  Then for $N_P(\cdot)$ a rate $k$ Poisson process
on $[0,\lambda]$, where $\tilde\epsilon / \lambda \leq 2.3$:
\[
\textrm{Pr}\left(\sup_{t \in [0,\lambda]} \left|\frac{N_P(t)}{k} - t \right|
 \geq \tilde\epsilon \right) \leq 2 \exp\left( 
     - \frac{k\tilde\epsilon^2}{2\lambda}
  \left(1 - \frac{\tilde\epsilon}{\lambda}\right) \right).
\]
\end{thm}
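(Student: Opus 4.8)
The plan is to combine the exponential (Wald) martingale of the Poisson process with Doob's maximal inequality, then optimize a free parameter and reduce the claim to a one-variable inequality. First I would split the two-sided event by a union bound,
\[
\textrm{Pr}\left(\sup_{t\in[0,\lambda]}\left|\frac{N_P(t)}{k}-t\right|\ge\tilde\epsilon\right)
 \le \textrm{Pr}\left(\sup_{t}(N_P(t)-kt)\ge k\tilde\epsilon\right)
  + \textrm{Pr}\left(\sup_{t}(kt-N_P(t))\ge k\tilde\epsilon\right),
\]
which is the source of the leading factor of $2$; the two tails are then treated by the same argument with opposite signs of the parameter.

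For the upper tail, fix $\theta>0$ and introduce $L_\theta(t)=\exp(\theta(N_P(t)-kt)-kt\,\phi(\theta))$ with $\phi(\theta)=e^\theta-1-\theta$. Using the independent-increments property of the rate $k$ process together with the identity $\textrm{E}(\exp(\theta(N_P(t)-N_P(s))))=\exp(k(t-s)(e^\theta-1))$, a short computation shows that $L_\theta$ is a nonnegative, right-continuous martingale with $\textrm{E}(L_\theta(\lambda))=1$. The crucial step is to absorb the time dependence of the drift: on the event that $N_P(t)-kt\ge k\tilde\epsilon$ for some $t\le\lambda$, and because $\phi(\theta)\ge0$, evaluating at that time gives $L_\theta(t)\ge\exp(\theta k\tilde\epsilon-k\lambda\,\phi(\theta))$, so that $\sup_t L_\theta(t)$ exceeds this deterministic threshold. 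Doob's maximal inequality for the nonnegative submartingale $L_\theta$ then yields $\textrm{Pr}(\sup_t L_\theta(t)\ge a)\le\textrm{E}(L_\theta(\lambda))/a=1/a$, whence $\textrm{Pr}(\sup_t(N_P(t)-kt)\ge k\tilde\epsilon)\le\exp(k\lambda\,\phi(\theta)-\theta k\tilde\epsilon)$.

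Next I would optimize over $\theta$. Writing $u=\tilde\epsilon/\lambda$, the exponent $k\lambda(e^\theta-1)-k\theta(\lambda+\tilde\epsilon)$ is minimized at $\theta=\ln(1+u)$, giving the Poisson rate function $k\lambda[u-(1+u)\ln(1+u)]$; the lower tail is identical with $\theta=-\ln(1-u)$ and produces $-k\lambda[u+(1-u)\ln(1-u)]$. It then remains to bound each exponent by $-\tfrac{k\lambda}{2}u^2(1-u)=-\tfrac{k\tilde\epsilon^2}{2\lambda}(1-\tilde\epsilon/\lambda)$. Taking the larger (upper-tail) exponent and dividing by $k\lambda$, this is the scalar inequality $u-(1+u)\ln(1+u)\le-\tfrac12 u^2(1-u)$, whose Taylor expansion $-\tfrac12 u^2+\tfrac16 u^3-\cdots$ lies below $-\tfrac12 u^2(1-u)$ for small $u$; I would verify it holds throughout $0<u\le2.3$, noting that for $u>1$ the right side is nonnegative while the left is negative, so it is trivial there. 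The main obstacle is this scalar verification, together with the care needed in the martingale step to control the time-inhomogeneous drift $-kt\,\phi(\theta)$ via the bound $t\le\lambda$; the remainder is the standard Chernoff-plus-Doob template. Finally, specializing the inequality to the single time $t=\lambda$ recovers Lemma~\ref{LEM:chernoff}.
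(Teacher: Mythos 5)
Your proposal is correct, and at the template level it is the same argument the paper uses: split the two tails by a union bound, form an exponential (sub)martingale, apply Doob's maximal inequality for right-continuous submartingales, and finish with a scalar inequality on the exponent. The differences lie in the two technical steps, and in both your version is the more careful one. First, the paper applies the maximal inequality directly to $\exp(\alpha N_P(t))$ against the time-dependent barrier $\exp(\alpha k t + \alpha k \tilde\epsilon)$; read literally, replacing that barrier by its value at $t=\lambda$ shrinks the event, so the paper's displayed bound $\textrm{Pr}(A_U) \le \textrm{E}(\exp(\alpha N_P(\lambda)))/\exp(\alpha k\lambda+\alpha k\tilde\epsilon)$ is not justified as written. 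Your compensated martingale $L_\theta(t)=\exp\left(\theta(N_P(t)-kt)-kt(e^\theta-1-\theta)\right)$ repairs exactly this point: the moving barrier is absorbed into the compensator, and the crude estimate $t\le\lambda$ is applied only to the nonnegative drift term, which is the legitimate direction. Second, the paper does not optimize the Chernoff parameter; it sets $\alpha=\tilde\epsilon/\lambda$ and invokes $e^\alpha-1-\alpha\le(\alpha^2/2)(1+\alpha)$ for $\alpha\le 2.31858\ldots$, which is the sole source of the hypothesis $\tilde\epsilon/\lambda\le 2.3$. You optimize exactly to the Poisson rate function and then need $u-(1+u)\ln(1+u)\le-\tfrac12 u^2(1-u)$; this in fact holds for all $u>0$ (the difference of the two sides vanishes to second order at $0$ and has nonnegative second derivative $u(2+3u)/(1+u)$), so your route would even dispense with the $2.3$ restriction. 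Two loose ends to tie off: for the lower tail, when $\tilde\epsilon>\lambda$ the event is empty because $N_P\ge 0$, so the substitution $\theta=-\ln(1-u)$ is only needed for $u<1$; and you still owe the companion scalar bound $u+(1-u)\ln(1-u)\ge\tfrac12 u^2(1-u)$ for that tail (the standard $u+(1-u)\ln(1-u)\ge u^2/2$ suffices, and is what the paper implicitly uses to get its stronger lower-tail exponent).
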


\begin{proof}
The approach will be similar to finding a Chernoff bound~\cite{chernoff1952}.
Since $\exp(\alpha x)$ is convex for any positive constant $\alpha$, and
$N_P(t)$ is a right continuous martingale, $\exp(\alpha N_P(t))$
is a right continuous submartingale.

Let $A_U$ denote the event that $(N_P(t)/k) - t > \epsilon$
for some $t \in [0,\lambda]$.  Then for all $\alpha > 0$:
\[
\textrm{Pr}(A_U) = \textrm{Pr}\left(\sup_{t \in [0,\lambda]}
  \exp(\alpha N_P(t)) \geq 
   \exp(\alpha k t + \alpha k \epsilon) \right).
\]
It follows from basic Markov-type inequalities on right
continuous submartingales (p. 13 of \cite{karatzass1991})
that this probability can be upper bounded as
\[
\textrm{Pr}(A_U) \leq \textrm{E}(\alpha \exp(N_P(\lambda)) / 
  \exp(\alpha k \lambda
  + \alpha k \tilde\epsilon)).
\]
Using the moment generating function for a Poisson
with parameter $k\lambda$:
\[
\textrm{E}[\exp(\alpha N_P(\lambda))] = \exp(k \lambda(\exp(\alpha) - 1)),
\]
which means
\[
\textrm{Pr}(A_U) \leq \exp(\lambda(e^\alpha - 1 - \alpha) 
  + \alpha \tilde\epsilon)^k.
\]
A Taylor series expansion shows that 
$e^\alpha - 1 - \alpha \leq (\alpha^2/2)(1 + \alpha)$
as long as $\alpha \in [0,2.31858...]$.  Set
$\alpha = \tilde\epsilon / \lambda$.  Simplifying the resulting
upper bound yields
\[
\textrm{Pr}(A_U) \leq \exp\left(-\frac{k\tilde\epsilon^2}{2\lambda}
 \left(1 - \frac{\tilde\epsilon}{\lambda}\right) \right).
\]
The other tail can be dealt with in a similar fashion,
yielding a bound
\[
\textrm{Pr}\left(\sup_{t \in [0,\lambda]} [N_P(\alpha)/k] - t < \tilde\epsilon\right)
  \leq \exp\left(-\frac{k\tilde\epsilon^2}{2\lambda} \right).
\]
The union bound on the two tails then yields the theorem.
\end{proof}

\begin{cor}
For $\epsilon \in (0,0.3)$, $\delta \in (0,1)$, and 
$\ln (\mu(B)/\mu(B')) > 1$, after  
\[
k = 2 (\ln (\mu(B)/\mu(B')) 
    (3\epsilon^{-1} + \epsilon^{-2}) \ln(2/\delta)
\]
runs of TPA, 
the points obtained can be used to build an 
$(\epsilon,\delta)$ omnithermal approximation.
\end{cor}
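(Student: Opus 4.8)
The plan is to reduce the corollary to Theorem~\ref{THM:process} by recognizing the omnithermal error as a uniform additive deviation of a Poisson process from its mean. Write $\lambda = \ln(\mu(B)/\mu(B'))$ and reparametrize the nested family by the log-measure coordinate $t = \ln(\mu(B)/\mu(A(\beta)))$, which increases from $0$ to $\lambda$ as $\beta$ decreases from $\beta_B$ to $\beta_{B'}$. In this coordinate the $\beta$-values collected from $k$ runs form a rate-$k$ Poisson process $N_P(\cdot)$ on $[0,\lambda]$ (exactly the process of Theorem~\ref{THM:process}), and the proposed estimate $\exp(N_P(\beta_B-\beta)/k)$ of $\mu(B)/\mu(A(\beta))$ is precisely $\exp(N_P(t)/k)$ viewed as an estimate of $e^t$. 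Thus an $(\epsilon,\delta)$ omnithermal approximation is exactly the event that $(1+\epsilon)^{-1} \le \exp(N_P(t)/k - t) \le 1+\epsilon$ holds for every $t\in[0,\lambda]$ simultaneously, which is the same as $\sup_{t\in[0,\lambda]}\left|N_P(t)/k - t\right| \le \ln(1+\epsilon)$.

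First I would set $\tilde\epsilon = \ln(1+\epsilon)$ and verify the hypothesis of Theorem~\ref{THM:process}: since $\tilde\epsilon < \epsilon < 0.3$ and $\lambda > 1$, we have $\tilde\epsilon/\lambda < 0.3 \le 2.3$, so the theorem applies and bounds the failure probability by $2\exp\!\left(-\tfrac{k\tilde\epsilon^2}{2\lambda}(1-\tilde\epsilon/\lambda)\right)$. It then suffices to choose $k$ so that this is at most $\delta$, i.e.\ $k \ge 2\lambda\ln(2/\delta)\,[\tilde\epsilon^2(1-\tilde\epsilon/\lambda)]^{-1}$. The corollary's prescription $k = 2\lambda(3\epsilon^{-1}+\epsilon^{-2})\ln(2/\delta)$ is therefore justified once I show $3\epsilon^{-1}+\epsilon^{-2} \ge [\tilde\epsilon^2(1-\tilde\epsilon/\lambda)]^{-1}$.

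The remaining and main step is this last inequality. Using $\lambda > 1$ to bound $1-\tilde\epsilon/\lambda$ below by its worst case $1-\tilde\epsilon$ (approached as $\lambda\to1^+$), and writing $3\epsilon^{-1}+\epsilon^{-2} = (1+3\epsilon)\epsilon^{-2}$, it reduces to the $\lambda$-free inequality $(1+3\epsilon)(1-\tilde\epsilon)\tilde\epsilon^2 \ge \epsilon^2$ with $\tilde\epsilon = \ln(1+\epsilon)$. I expect this to be the crux of the argument: it is tight as $\epsilon\to0^+$ (both sides agree to leading order), so crude polynomial surrogates for $\ln(1+\epsilon)$ do not immediately close the gap, and one must track the competition between the shrinkage $\tilde\epsilon/\epsilon = \ln(1+\epsilon)/\epsilon$ (which falls to about $0.87$ at $\epsilon=0.3$) and the compensating factor $(1+3\epsilon)$. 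I would prove it by studying $h(\epsilon) = (1+3\epsilon)(1-\tilde\epsilon)\tilde\epsilon^2/\epsilon^2$ on $(0,0.3)$, showing $h(\epsilon)\ge1$ via a second-order expansion near $0$ together with a monotonicity or endpoint check on the remainder. Once this inequality is established, the chosen $k$ forces the failure probability below $\delta$, so $\sup_t|N_P(t)/k - t|\le\ln(1+\epsilon)$ holds with probability at least $1-\delta$, which is the desired $(\epsilon,\delta)$ omnithermal approximation.
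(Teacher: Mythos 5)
Your proposal is correct and follows essentially the same route as the paper: apply Theorem~\ref{THM:process} with $\tilde\epsilon=\ln(1+\epsilon)$, use $\lambda>1$ to strip out the $\lambda$-dependence, and reduce to a single-variable inequality on $(0,0.3)$ verified by Taylor expansion. The only (harmless) difference is that you bound $1-\tilde\epsilon/\lambda$ below by $1-\tilde\epsilon$ while the paper uses the slightly more lossy $1-\epsilon$, so your final inequality $(1+3\epsilon)(1-\tilde\epsilon)\tilde\epsilon^{2}\ge\epsilon^{2}$ is marginally easier to check than the paper's.
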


\begin{proof}
In order for the final approximation to be within a multiplicative 
factor 
of $1 + \epsilon$ of the true result, 
the log of the approximation must be
accurate to an additive term of $\ln(1 + \epsilon)$.  
Let $\lambda = \ln(\mu(B)/\mu(B'))$, so 
$k = 2\lambda(3\epsilon^{-1} + \epsilon^{-2})\ln(2/\delta)$. 
To prove the corollary from the theorem, it suffices 
to show that 
$2\exp(-2\lambda(3\epsilon^{-1} + 
  \epsilon^{-2}\ln(2/\delta)[\ln(1+\epsilon)]^2
  (1 - \epsilon/\lambda)/(2\lambda))
 < \delta$.
After canceling the factors of $\lambda$, and noting that when
$\lambda > 1$, $1 - \epsilon/\lambda < 1 - \epsilon$, it suffices
to show that 
$(3\epsilon^{-1} + \epsilon^{-2})(1 - \epsilon)
  [\ln(1 + \epsilon)]^2 > 1.$
This can be shown for $\epsilon \in (0,0.3)$ 
by a Taylor series expansion.
\end{proof}

\subsection{Example:  Omnithermal approximation
for the Ising model}

Consider the following model.  The value of $\beta$ is drawn from a 
prior density $f_{\textrm{prior}}(\cdot)$ on $[0,\infty)$, 
and then the data (conditioned on 
$\beta$) is drawn from the Ising model.  This was used by 
Besag~\cite{besag1974} as a model for agriculture wherein soil quality
of adjacent plots was more likely to be similar.

Given the data $X$, 
the posterior in the Bayesian analysis is the following density on $\beta:$
\begin{equation}
\label{EQN:posterior}
f_{\textrm{post}}(b) \propto f_{\textrm{prior}}(b) \frac{\exp(b H(X))}{Z(b)}.
\end{equation}
The {\em evidence} for the model is the integral of the right hand side
of (\ref{EQN:posterior}) as $b$ runs from 0 to $\infty$.  This is only
a one-dimensional integration, and so should be straightforward from
a numerical perspective, except that $Z(b)$ is unknown.

Here is where the omnithermal approximation comes in:  it gives an
approximation for $Z(b)$ that is valid for {\em all} values of $b$ at
once.  Any numerical integration technique can be used, and the final 
value for the evidence (not including error arising from 
the numerical method) will
be within a factor of $1 + \epsilon$ of the true answer.

Figure~\ref{FIG:ising} presents two omnithermal
approximations for $\log Z_\beta$ generated using this 
method on a small $4 \times 4$ square lattice.  
The top graph is the result of a single run of TPA from $\beta = 2$ down to 
$\beta = 0$.  At each $\beta$ value returned by TPA,
the approximation drops by 1.  The bottom graph is
the result of $\lceil \ln(4\cdot 10^6) \rceil = 16$
runs of TPA.  This run told us that 
$Z_2 \leq 217$ with confidence $1 - 10^{-6}/2$.
Therefore, using $\epsilon = 0.1$, and $\delta = 10^6 / 2$ in
Theorem~\ref{THM:process} shows that
$r = 330000$ samples suffice 
for a $(0.1,10^{-6})$ omnithermal approximation.


\begin{center}
\begin{figure}
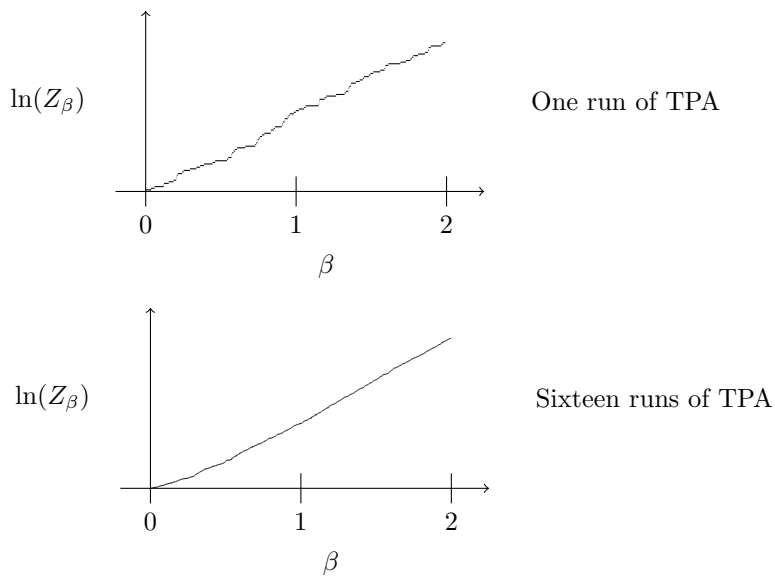

\begin{center}
 
\end{center}
\caption{Omnithermal approximations for the partition function of the
Ising model on
a $4 \times 4$ lattice \label{FIG:ising}}
\end{figure}
\end{center}

\section{Conclusions and further work}
\label{SEC:conclusions}
The strength of {\tpa} is the generality of the procedure, but that same
generality means that it is possible to do better in restricted
circumstances.  For instance, when $f(x)$ falls into the class of 
Gibbs distributions, 
\u{S}tefankovi\u{c} et al.~\cite{stefankovicvv2009} were able to give
an $\tilde O(\ln(Z))$ algorithm for approximating $Z$, but the high constants
involved in their algorithm make it solely of theoretical interest.  
(Here the 
$\tilde O$ notation hides logarithmic factors.)  {\tpa} can be used in
conjunction with their algorithm~\cite{huberPreprint3} 
to build an $O(\ln(Z)\ln(\ln(Z)))$ 
algorithm, and work continues to bring this running time down to 
$O(\ln(Z)).$

\section*{Acknowledgments}

Both authors were supported in this work by NSF CAREER grant
DMS-05-48153.  Any opinions, findings, and conclusions or 
recommendations expressed in this material 
are those of the authors and do not necessarily reflect the 
views of the National Science Foundation.



\bibliographystyle{plain}
\bibliography{../../../2011_refs}

\end{document}